\numberwithin{equation}{section}
\newtheorem{propo}{Proposition}[section]
\newtheorem{lemma}[propo]{Lemma}
\newtheorem{theo}[propo]{Theorem}
\newcommand{\ld}{,\ldots ,}
\newcommand{\ra}{ \rightarrow }
\newcommand{\lan}{ \langle }
\newcommand{\ran}{ \rangle }
\newcommand{\Irr}{\mathop{\rm Irr}\nolimits}
\newcommand{\ZZ}{\mathbb{Z}}
\newcommand{\al}{\alpha}
\newcommand{\be}{\beta}
\newcommand{\ep}{\varepsilon}
\newcommand{\lam}{\lambda }
\newcommand{\om}{\omega}
\newcommand{\Om}{\Omega}
\newcommand{\si}{\sigma }
\def\d12{{_{12}}}
\def\acf{{algebraically closed field }}
\def\ccc{{constituent }}
\def\f{{following }}
\def\ii{{if and only if }}
\def\ir{{irreducible }}
\def\irt{{irreducible. }}
\def\irr{{irreducible representation }}
\def\itf{{It follows that }}
\def\mult{{multiplicity }}
\def\rep{{representation }}
\def\reps{{representations }}
\def\rept{{representation. }}
\def\2k{2^k}
\newcommand{\bp}{\begin{proof} }
\newcommand{\enp}{\end{proof}}
\newcommand{\bl}{\begin{lemma}\label}
\newcommand{\el}{\end{lemma}}
\newcommand{\med}{\medskip}
\renewcommand{\setminus}{\smallsetminus}
\def\hw{highest weight }
\begin{document}

\title[ Lie algebra representations  with maximum weight  multiplicity  2]
{Irreducible representations of simple  Lie algebras   with maximum weight  multiplicity  2  }
\author{ A.~E.~Zalesski }
\address{Departmento  de Mathem\'atica, Universidade de Bras\'ilia, Brasilia-DF, 70910-900, Brazil  }
\email{alexandre.zalesski@gmail.com}
 

\begin{abstract}
We determine   
the irreducible representations of simple Lie algebras with maximum weight  multiplicity  2. 
\end{abstract}

\thanks{AMS Mathematics subject classification 2020: 17B10,20G05}

\thanks{Keywords: Simple Lie algebras, simple algebraic groups, representtaions, maximal weight multiplicities 2}
\date{}

\maketitle


\section{Introduction}

There is a significant number of research works devoted to the classification of \ir \reps of Lie algebras  whose all weights are of \mult 1, see 
\cite[\S 6]{Seitzclass}, \cite{BZ,Ki,Ki2}, \cite{Hw}.  
The result which is now well known,  has many applications. The natural question on determining the \ir \reps whose all weights are of \mult at most 2
has arisen in a recent book by N. Katz and Pham Tiep \cite[Section 3.2]{KT} for applications to the theory of monodromy groups of hypergeometric sheaves, a certain aspect of number theory. Author's results there are incomplete, they 
only considered the self-dual \reps of simple algebras of type $D_n$, $n$ odd, $E_6$ and $A_n$, $n>1$. 
 
In this paper we complete the classification of  \ir representations of simple Lie algebras  with maximal weight \mult equal to 2. 

\begin{theo}\label{th1} Let $L$ be a simple Lie algebra over the complex numbers and $\phi$ is an \irr of $L$. Let $\om$ be a highest weight of $\phi$.
Suppose that the maximal weight multiplicity of $\phi$ equals $2$. Then the pairs $(L,\om)$ are as in Table $1$.
\end{theo}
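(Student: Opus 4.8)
The plan is to classify all pairs $(L, \omega)$ by a combination of Weyl-dimension / weight-multiplicity combinatorics and a reduction to low-rank cases. The fundamental tool is Freudenthal's recursive multiplicity formula, which expresses the multiplicity $m_\lambda$ of a weight $\lambda$ in terms of multiplicities of weights strictly higher in the dominance order. This reduces questions about "maximum multiplicity equals $2$" to finitely checkable arithmetic once the rank and the highest weight $\omega$ are bounded. My first reduction would be to recall that the maximal weight multiplicity is attained at the weights of minimal length in the $W$-orbit structure — typically at $0$ (if $\omega$ lies in the root lattice) or at the dominant weight(s) closest to $0$. So I would first compute $m_0$ (or the multiplicity of the appropriate minimal dominant weight) as a function of $\omega$.

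Next I would set up a dimension/growth argument to bound the rank and the coefficients of $\omega$. The key monotonicity fact is that weight multiplicities grow as one moves toward the center of the weight diagram and as one increases either the rank $n$ or the coefficients $a_i$ in $\omega = \sum a_i \varpi_i$. Concretely, I would isolate the fundamental representations and their small multiples: for each simple type $A_n, B_n, C_n, D_n, E_6, E_7, E_8, F_4, G_2$, I would determine which fundamental weights $\varpi_i$ already force maximum multiplicity $\geq 3$, and which give multiplicity $\leq 2$. A representation with highest weight $\omega = \sum a_i\varpi_i$ dominates (in the sense of multiplicities) any $\omega' \leq \omega$ componentwise, so once a "minimal bad" weight is found in a family, every weight above it is excluded. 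This gives a finite list of surviving candidates in each type, plus a bounded-rank region that must be checked by hand or by direct Freudenthal computation.

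The genuinely delicate part, and what I expect to be the main obstacle, is the classical families $A_n, B_n, C_n, D_n$ for \emph{arbitrary} rank: here I cannot simply enumerate, so I must prove a clean uniform statement such as ``for $n$ large and $\omega$ not among an explicit short list, the maximum multiplicity is at least $3$.'' For type $A_n$ the multiplicities of the irreducibles $V(\varpi_k)$ (fundamental, hence minuscule) are all $1$, so the first interesting cases are $\varpi_1 + \varpi_n$ (adjoint) and $2\varpi_1$, and I would compute $m_0$ for these explicitly as a function of $n$, showing it crosses $2$ at a controlled point. The analogous analysis for $B,C,D$ uses the spin and vector representations and their tensor-square decompositions. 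The hard technical lemma is a \emph{monotonicity-in-rank} statement: embedding $L_n \hookrightarrow L_{n+1}$ via deletion of a node and tracking how $m_0$ evolves, so that multiplicity $2$ in rank $n+1$ forces multiplicity $\leq 2$ (hence a known case) in rank $n$.

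\textbf{Assembling the classification.} Having bounded the rank in each type and reduced to finitely many $\omega$, I would finish by direct computation: for each surviving candidate, apply Freudenthal's formula (or known character/branching data) to compute the exact maximum multiplicity and verify it equals $2$, discarding those giving $1$ (already classified in the cited works) or $\geq 3$. The self-dual cases in types $D_n$ ($n$ odd), $E_6$, and $A_n$ treated in \cite{KT} serve both as a consistency check and as a partial input: I would reuse their verified entries and concentrate the new work on the non-self-dual representations and the types $B_n, C_n, E_7, E_8, F_4, G_2$ they did not address. The output is exactly the finite list recorded in Table $1$, and the proof is complete once every type's bounded candidate set has been matched against that table.
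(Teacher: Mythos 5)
Your outline follows the same general skeleton as the paper --- monotonicity of multiplicities in the highest weight, reduction to lower rank via subgroups, and verification of a residual list against known tables --- but as written it has a genuine gap that is not merely a matter of omitted computation. You assert that the monotonicity argument ``gives a finite list of surviving candidates in each type, plus a bounded-rank region.'' This is false for type $A_2$: Table 1 contains the infinite families $(1,a)$ and $(a,1)$ for all $a\geq 1$, every member of which has maximum multiplicity exactly $2$. An exclusion scheme based on finding ``minimal bad weights'' and discarding everything componentwise above them can never certify such an infinite family; one needs a \emph{positive} uniform argument that $m_{(1,b)}(\mu)\leq 2$ for every weight $\mu$ and every $b$. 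The paper does this in Lemma \ref{12w} by an induction on $b$ powered by Cavallin's weight-shifting identity (Proposition \ref{ky6}), which reduces the multiplicity of an arbitrary dominant weight of $V_{(1,b)}$ to a multiplicity in a smaller module $V_{(1,c_2)}$. Nothing in your plan produces this step, and without it the $A_2$ row of the table is unproved. (Relatedly, your componentwise-domination principle needs the precise form of Lemma \ref{sh1}, which also requires $\mu\prec\om$ in the root order, not just $b_i\le a_i$.)

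A second, smaller but still real, gap concerns the rank reduction. You propose only Levi-type restrictions (``deletion of a node''). For several infinite classical families this is not enough: for $G=C_n$ the candidates $a\om_1$ and $a\om_1+\om_n$ survive all Levi restrictions, and the paper eliminates them using \emph{non-Levi} subsystem subgroups of type $C_3$ and $C_4$ whose simple systems involve the composite root $\al_1+\cdots+\al_{n-2}$ (proof of Lemma \ref{c4n}); similarly $\om_1+\om_2$ and $3\om_1$ for $D_n$ are killed via a $D_4$ subsystem subgroup with base $\al_1,\ \al_2+\cdots+\al_{n-2},\ \al_{n-1}-\al_n,\ \al_{n-1}+\al_n$, and the $B_n$ case is finished by embedding a $D_n$ subsystem subgroup (Lemma \ref{23b}). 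Your explicit computation of $m_0$ for $2\om_1$ and the adjoint covers part of this, but not, for example, $a\om_1+\om_n$ in $C_n$. So even granting the Freudenthal verifications you defer, the argument as proposed does not close: you would need either the subsystem-subgroup machinery (the paper's Lemma \ref{44}) in its non-Levi form, or separate explicit multiplicity estimates for each of these residual families.
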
 

The table additionally  provides the dimension of $\phi$ and the number $n_i$ of the weights of multiplicity $i$ in $\phi$ for $i=1,2$.  For readers' convenience we record in Table 2
 \ir representations of simple Lie algebras with all weight of \mult 1.

\bigskip
\centerline{Table 1: Irreducible representations of simple Lie algebras and algebraic groups}

\centerline{in characteristic $0$ with maximum weight multiplicity $2$}

\bigskip\small{
\begin{center}
\begin{tabular}{|c|c|c|c|c|}
\hline
type&highest weight&$n_1$&$n_2$&dimension\\
\hline
$A_2$ & $ (1,a),(a,1) $ & $3a+3$ & a(a+1)/2 &$(a+1)(a+3)$  \\
\hline
$A_3$ & $ (110), (011),(020),(030)$ & $12,12,18,38$ & $4,4,1,6$ &$20,20,20,50$  \\
\hline
$A_n,n>3$ & $(110...0),(0...011)$ & $n(n+1)$ & $n(n^2-1)/6$ &$n(n+1)(n+2)/3$  \\
&(020...0),(0...020) &$n^2(n+1)/2$&$n(n-2)(n-1)^2/24$ & $n(n+2) (n+1)^2/12$\\
\hline
$B_2,C_2$ & $ (11),(02),(20),(03),(30)$ & $8,12,8,12,20$ & $4,1,2,4,5$ &$16,14,10,20,30$ \\
\hline
$C_3$ & $ (010)$ & $12$ & $1$ &$14$  \\
\hline
$C_4$ & $(0001),(0010) $ & $40,32$ & $1,8$ &$42,48$  \\
\hline
$C_5$ & $(00001)$ & $112$ & $10$ &$132$\\
\hline
$F_4$ & $(0001)$ & $24$ & $1$ &$26$\\
\hline
$G_2$ & $(01)$ & $13$ & $1$ &$14$\\
\hline
\end{tabular} 
\end{center}
}

\med

To read Table 1  observe that  the fundamental weights (and the simple roots) of $L$ are ordered as in \cite{Bo} and a weight
$\om$ is written as a string of non-negative integers,  where $(0... 010... 0)$ with 1 at the $i$-th position  is the $i$-th fundamental weight. 
The fundamental weights are also denoted by $\om_1\ld \om_n$, where $n$ is the rank of $L$. 

Note that the above classification is equivalent to the classification of \ir \reps of simple algebraic groups $G$ with Lie algebra $L$ over the complex numbers with maximum weight \mult 2. 
In this paper we prefer to use group theory terminology and notation. 
A similar problem can be considered for simple algebraic groups over an algebraically closed field of prime characteristic. We plan to address this
more complex problem in a subsequent work.   

Note that in \cite{TZ18} we determined the \ir \reps of $L$ with at most one weight \mult is greater than 1. (This work also contains 
a similar result for simple algebraic groups over a field of prime characteristic.) In \cite{Seitzclass} and \cite{SZ1} is obtained a classification of 
\ir \reps of simple algebraic groups in arbitrary characteristic with all weight of \mult 1.

\med
{\it Notation} Let $\ZZ$ be the ring of integers, $\ZZ^+$ the set of non-negative integers. For a real number $r$ we denote by $[r]$ the maximal integer $k$ such that $k\leq r$.

 If otherwise is not explicitely stated, $G$ denotes a simple (non-abelian) linear algebraic group defined over an algebraically closed field of characteristic 0. Then $G$  is of type $A_n$, $B_n$, $C_n$, $D_n$, $E$, $n=6,7,8$, $F_4$ or $G_2$. For brevity, we write $G=A_n$ to say that $G$ is of type $A_n$, etc. 

We fix a maximal torus  $T$ of $G$. The set of  rational homorphisms $T\ra F^\times$ is denoted by $\Om(G)$, and the elements $\om\in\Om(G)$ are called weights of $G$. The conjugation action of $T$ on $G$ determines the $T$-weights of $G$, which are called roots (or $T$-roots)  of $G$. As maximal tori of $G$ are conjugate, the choice of a maximal torus is immaterial. The set $\Om(G)$ is a $\ZZ$-lattice of finite rank, which is called the rank $n$ of $G$. The set of roots  is denoted by $\Phi(G)$,   and  $\Phi(G)\subset \Om(G)$.  One chooses a subset  $\Pi(G)=\{\al_1\ld \al_n\}$ of $\Phi(G)$  whose elements are called simple roots.    We order simple roots as in \cite{Bo}. These determine a basis of  $\Om(G)$ which elements are called fundamental weights and denoted by $ \om_i$, $ i=1\ld n$. 
Every weight therefore is a linear combination $\sum a_i\om_i$ with $a_i\in\ZZ$. For brevity, we write weights as strings $(a_1\ld a_n)$.  Weights 
$(a_1\ld a_n)$ with non-negative $a_1\ld a_n$ are called dominant; the subset of dominant weights in $\Om(G)$ is denoted by $\Om^+(G)$.
 (We often omit $G$ if this is clear from the context.)
If $\om,\om'\in\Om(G)$ then we write $\om\succ\om'$ or $\om'\prec\om$ if $\om'=\om-\sum b_i\al_i$ for distinct $\al_i\in\Pi(G)$ and $b_i\in\ZZ^+$ for $i=1\ld n$. If $\om,\om'$ are dominant and $\om\succ\om'$ then we say that $\om'$ is a subdominant weight for $\om$. 

A semisimple  subgroup $H$ of $G$ normalized by $T$ is called  a subsystem subgroup (with respect of $T$). Then $H$ is determined by a certain subset $S$, called the subsystem base, 
of $\Phi(G)$ which forms a set of simple roots of $H$, see \cite[Section 13.1]{MTbook}.  We frequently deal with special cases of subsystem subgroups $X=X_S$ defined by a subset $S\subset \Pi(G)$. These are generated by the root subgroups $X_{\pm \al}$ with $\al\in S$. 
In these cases, by technical reason,  it  is convenient  to define $S$ as a string $(*...-...*)$ of length $|\Pi(G)|$
writing $-$ at the places that   correspond to subset $\Pi\setminus S$ of $\Pi$. For instance, if $G=C_3$ then $(-**)$ defines a subsystem subgroup of type $C_2$ and $(**-)$ defines the subsystem subgroup of type $A_2$. (Of course, to decide on the type of $X$ one has to keep in mind the Dynkin diagram and root ordering.)

 If $\lam\in \Om^+(G)$ we write $V_\lam$ for an \ir $G$-module with \hw $\lam$. If $\mu$ is a weight of  $V_\lam$ then $m_\lam(\mu)$
is the \mult of $\mu$ in  $V_\lam$. If $V$ is an \ir $G$-module then we write $V=V_\lam$ to say that $\lam$ is the \hw of $V$. If $\mu\in \Om^+(G)$ and $ \mu\prec\lam$ then $\mu$ is a weight of $V_\lam$; below we use this fact with no reference. 

We denote by $\Omega_k(G)$ the set of 
 dominant weights $\lambda $ such that $V_\lam(G)$ has no 
dominant   weight 
of multiplicity greater than $k$, and set $\Omega'_k(G)= \Omega_k(G)\setminus \Omega_{k-1}(G)$ for $k>1$.  Thus, Table 1 lists $\Om_2'(G)$.The weight multipicities of certain small representations of $G$ are given in \cite{lub} and \cite{BPM}.

\section{Preliminaries}\label{sec:omega2}
 
Throughout this section we take $G$ to be a simply connected simple algebraic group over \acf  $F$ of characteristic $p\geq 0$.

\def\hw{highest weight } 

\begin{lemma}\label{sh1}  \rm {\cite{E}, \cite[Theorem 1.3]{BZ}}   
Let  $\om =\sum_{i=1}^r a_i\om_i$ and $\mu=\sum b_i\om_i$ 
  be  dominant weights such that $\mu\prec\om$.     Suppose that $a_i\geq b_i$. Let $\nu\prec\mu$. Then $m_\om(\nu)\geq m_\mu(\nu)$.\end{lemma}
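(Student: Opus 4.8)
The plan is to deduce the inequality from two monotonicity statements linked through the intermediate weight $\nu+\lambda$, where I set $\lambda:=\om-\mu$. First I would reduce to the case that $\nu$ is dominant: since $m_\om(\nu)$ and $m_\mu(\nu)$ depend only on the $W$-orbit of $\nu$, and since we may assume $m_\mu(\nu)>0$ (otherwise the claim is trivial), we may replace $\nu$ by its dominant $W$-conjugate, which is then a dominant weight of $V_\mu$ and so satisfies $\nu\preceq\mu$. Note that $\lambda=\sum_i(a_i-b_i)\om_i$ is dominant because $a_i\ge b_i$, and lies in the non-negative integral span of the simple roots because $\mu\prec\om$; both facts are used below. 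The goal then factors as
\[
m_\om(\nu)\ \ge\ m_\om(\nu+\lambda)\ \ge\ m_\mu(\nu),
\]
and I would prove the two inequalities separately.

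For the second inequality I would increase the highest weight. Since $\lambda$ is dominant, $V_\om=V_{\mu+\lambda}$ occurs in $V_\mu\otimes V_\lambda$ as the Cartan component, with multiplicity one; equivalently, the vertex $u_\mu\otimes u_\lambda$ of highest weight generates a connected crystal isomorphic to $B(\om)$ inside $B(\mu)\otimes B(\lambda)$. Using the tensor product rule for crystals together with $\varepsilon_i(u_\lambda)=0$, one checks that $\tilde f_i(b\otimes u_\lambda)=(\tilde f_i b)\otimes u_\lambda$ whenever $\tilde f_i b\neq 0$; hence every element $b\otimes u_\lambda$ with $b\in B(\mu)$ is reached from $u_\mu\otimes u_\lambda$ by lowering operators and so lies in this Cartan component. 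The assignment $b\mapsto b\otimes u_\lambda$ is therefore a weight-preserving injection $B(\mu)\hookrightarrow B(\om)$ that shifts weights by $\lambda$, and restricting to the fibre over $\nu$ gives $m_\om(\nu+\lambda)\ge m_\mu(\nu)$.

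For the first inequality I would lower the weight inside the fixed module $V_\om$. Here $\nu$ and $\nu+\lambda$ are both dominant, $\nu\preceq\nu+\lambda$, and $\nu+\lambda\preceq\om$ (since $\om-(\nu+\lambda)=\mu-\nu\succeq 0$). I would build a chain $\nu+\lambda=\eta_0\succ\eta_1\succ\cdots\succ\eta_m=\nu$ in which $\eta_{k+1}=\eta_k-\alpha_{i_k}$ for a simple root $\alpha_{i_k}$ with $\langle\eta_k,\alpha_{i_k}^\vee\rangle\ge 1$. For such a root, every crystal element $b$ of weight $\eta_k$ has $\varphi_{i_k}(b)=\varepsilon_{i_k}(b)+\langle\eta_k,\alpha_{i_k}^\vee\rangle\ge 1>0$, so $\tilde f_{i_k}$ is a nonzero, hence injective, map from the weight-$\eta_k$ space to the weight-$\eta_{k+1}$ space; composing these injections yields $m_\om(\nu)\ge m_\om(\nu+\lambda)$.

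The main obstacle is the existence of a suitable simple root at each step, i.e. one that occurs in $\beta:=\eta_k-\nu\succeq 0$ and simultaneously pairs positively with $\eta_k$. This is where positive-definiteness of the $W$-invariant form $(\,,\,)$ enters: writing $\eta_k=\nu+\beta$ gives $(\beta,\eta_k)=(\beta,\nu)+(\beta,\beta)$, which is strictly positive because $(\beta,\nu)\ge 0$ ($\nu$ dominant) and $(\beta,\beta)>0$ ($\beta\neq 0$). Hence some simple root occurring in $\beta$ has positive inner product with $\eta_k$, forcing $\langle\eta_k,\alpha_{i_k}^\vee\rangle\ge 1$; subtracting it keeps $\eta_{k+1}\succeq\nu$, so the height of $\eta_k-\nu$ drops by one and the chain terminates at $\nu$ after finitely many steps. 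The only remaining care is the crystal tensor-rule bookkeeping of the second step; alternatively one could run that step representation-theoretically via the map $w\mapsto\pi(w\otimes u_\lambda)$ into the Cartan summand, and the first step via unimodality of the $\mathfrak{sl}_2$-strings, but the crystal formulation makes the required injectivity transparent.
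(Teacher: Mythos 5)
Your argument is correct. Note that the paper offers no proof of Lemma \ref{sh1} at all: it is quoted from Elashvili and from Berenstein--Zelevinsky, so there is no in-text argument to compare against, and a self-contained proof is genuinely added value. Your decomposition $m_\om(\nu)\ge m_\om(\nu+\lam)\ge m_\mu(\nu)$ with $\lam=\om-\mu$ assembles the statement from two independent monotonicity principles. The second inequality is the Cartan-component (crystal) embedding $b\mapsto b\otimes u_\lam$ of $B(\mu)$ into the component of $u_\mu\otimes u_\lam$ in $B(\mu)\otimes B(\lam)$; this is exactly where the hypothesis $a_i\ge b_i$ enters, since it makes $\lam$ dominant, and your tensor-rule computation with $\varepsilon_i(u_\lam)=0$ is the right verification that the embedding lands in $B(\om)$. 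The first inequality is in substance the paper's Lemma \ref{bz1} (also only quoted from [BZ]), which you reprove correctly: the identity $(\beta,\eta_k)=(\beta,\nu)+(\beta,\beta)>0$ with $\nu$ dominant produces a simple root $\al_{i_k}$ occurring in $\beta$ with $\langle\eta_k,\al_{i_k}^\vee\rangle\ge 1$, and then $\varphi_{i_k}(b)\ge 1$ makes $\tilde f_{i_k}$ injective on the weight-$\eta_k$ elements. The preliminary reduction to dominant $\nu$ is also handled properly (one needs $m_\mu(\nu)>0$ to guarantee that the dominant conjugate satisfies $\nu\preceq\mu$, hence $\nu+\lam\preceq\om$). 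The only remark worth adding is that the crystal computation gives the multiplicities of the characteristic-zero irreducible (equivalently, of the Weyl module), which is precisely the setting in which the paper applies the lemma.
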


\bl{bz1}  {\rm \cite[Theorem 1.3]{BZ}} Let $\mu\prec\nu\prec\lam$ be dominant weights of $G$. Then $m_\lam(\mu)\geq m_\lam(\nu)$.\el

  \begin{lemma}\label{44}  Let  $X$ be a subsystem subgroup of $G$ normalized by 
$T$.  Let $\om\in \Omega_k(G)$. Let $V_X(\mu)$ be an $FX$-composition factor of $V_\om$, for some dominant weight $\mu$ in the character group of $X\cap T$. Then $\mu\in\Omega_k(X)$. \end{lemma}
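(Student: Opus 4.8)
The plan is to prove the stronger statement that \emph{every} $T_X$-weight of $V_X(\mu)$ has multiplicity at most $k$, where $T_X=(T\cap X)^\circ$ is the maximal torus of $X$; since the dominant weights of $V_X(\mu)$ form a subset of all its $T_X$-weights, this immediately yields $\mu\in\Omega_k(X)$. On the hypothesis side I would first upgrade $\om\in\Omega_k(G)$ to the statement that every $T$-weight $\nu$ of $V_\om$ (dominant or not) satisfies $m_\om(\nu)\leq k$. This is standard: weight multiplicities are invariant under the Weyl group $W$ of $G$, and every weight is $W$-conjugate to a dominant one, so the maximum of $m_\om$ over all $T$-weights is attained on a dominant weight and hence is $\leq k$ by definition of $\Omega_k(G)$.

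The crux is that distinct $T$-weights of $V_\om$ may restrict to the same $T_X$-weight, so restriction to $T_X$ can in principle increase multiplicities; the whole point is to show this collapsing does not occur inside a single composition factor. To control it I would introduce the central torus $Z=C_T(X)^\circ$. Since $X$ is generated by the root subgroups attached to a linearly independent subset $S$ of $\Phi(G)$, one has $Z=\big(\bigcap_{\al\in S}\ker\al\big)^\circ$, whence $\dim Z+\dim T_X=\dim T$ and $Z\cap T_X$ is finite; thus $T=T_X\cdot Z$. Because $Z$ centralizes $X$, each $Z$-weight space $V_\om[\zeta]$ is an $X$-submodule, and $V_\om|_X=\bigoplus_\zeta V_\om[\zeta]$ is a decomposition into $X$-modules. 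Consequently $V_X(\mu)$ is a composition factor of $V_\om[\zeta]$ for some single character $\zeta$ of $Z$.

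I would finish by computing $T_X$-weight multiplicities inside this fixed summand. The restriction map $\nu\mapsto(\nu|_{T_X},\nu|_Z)$ on the character group of $T$ is injective (a character trivial on both $T_X$ and $Z$ is trivial on $T=T_X Z$), so for each $T_X$-weight $\lam$ the space of vectors in $V_\om[\zeta]$ of $T_X$-weight $\lam$ is exactly the $T$-weight space $V_\om^{\nu}$ for the unique $\nu$ with $\nu|_{T_X}=\lam$ and $\nu|_Z=\zeta$ (and is $0$ if no such weight occurs). Hence the multiplicity of $\lam$ in $V_\om[\zeta]$ equals $m_\om(\nu)\leq k$. Since $T_X$-weight multiplicities are additive along a composition series, the multiplicity of $\lam$ in the composition factor $V_X(\mu)$ is at most that in $V_\om[\zeta]$, hence $\leq k$. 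As $\lam$ was arbitrary, $\mu\in\Omega_k(X)$.

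The main obstacle, and the place where care is needed, is exactly the potential collapsing of $T$-weights under restriction to $T_X$; the argument hinges on the observation that the $Z$-grading separates all $T$-weights sharing a given $T_X$-restriction, so that within one $Z$-homogeneous $X$-submodule no two distinct $T$-weight spaces are merged. The remaining points — that $T\cap X$ has the expected dimension, that $Z\cap T_X$ is finite, and the Weyl-invariance of multiplicities — are standard and characteristic-free, matching the setting $p\geq 0$.
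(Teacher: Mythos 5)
Your proposal is correct and follows essentially the same route as the paper: both arguments hinge on the central torus $Z=C_T(X)^\circ$, the decomposition $T=(T\cap X)\cdot Z$, and the observation that the $Z$-character separates the $T$-weights lying over a given $(T\cap X)$-weight, so that no collapsing of multiplicities occurs within a $Z$-homogeneous piece containing the composition factor. The only (immaterial) difference is that you first split $V_\om$ into $Z$-isotypic $X$-submodules and then bound multiplicities along a composition series, whereas the paper takes an $F(XT)$-composition series and invokes the scalar action of $Z$ on the irreducible factor directly.
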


\begin{proof} Suppose the contrary. 
Write $TX=XZ$, where  $Z=C_T(X)^\circ$. Let $0\subset M_1\subset\cdots\subset M_t=V_\om$ be an 
$F(XT)$-composition series of $V_\om$. Then there exists $i$ such that $V_X(\mu) \cong M_i/M_{i-1}$.
Now $M_i = M_{i-1}\oplus M'$ as $FT$-modules, $Z$ acts by scalars on $M'$ and
the set of $(T\cap X)$-weights in $M'$ (and their multiplicities) are precisely the same as in 
$V_X(\mu)$. 

Let $\nu$ be a  weight of $V_X(\mu)$.   
Then $\nu$ corresponds to a   $T$-weight $\nu'$ of $M'$. 
Therefore if $\nu$ is a $(T\cap X)$-weight occurring in $V_X(\mu)$ with
multiplicity greater than $k$,  then the \mult of weight $\nu'$ in $M'$ is greater than $k$. 
Then the \mult of some  dominant weight $\nu_1$ of $G $ in $M$ is greater than $ k$. This is a contradiction.  The result follows.\end{proof}

Recall that we use the simple root ordering as in \cite{Bo}. Note that $\om|_X$ means $\om|_{X\cap T}$, where $T$ is a maximal torus of $G$ defining the root system and $X\cap T$ is a maximal torus of $X$ defining the root system of $X$.




\bl{bgt1} {\rm \cite[Lemma 2.2.8]{BGT}} Let $G$ be a simple algebraic group in arbitrary characterisitic, $\Pi(G)$ the set of simple roots and let $V$ be an \ir G-module with highest weight $'om$. 
Suppose that $\mu = \om - \Sigma_{\al\in S}
 c_\al\al  $ is a dominant  weight of $V_\om$ for some proper subset $S\subset \Pi(G)$.
Then $m_V (\mu) = m_{V '} (\mu')$, where $ V ' $ is an \ir $X$-module with \hw $ \om|_X $,   $\mu' = \mu|_X$ and $X =\lan U_\pm \al |\al\in   S\ran$.\el

Recall that dominant weight $\om$ of $G$ and a weight $\nu\prec \om$, we denote by $m_\om(\mu)$ the \mult of  $nu$ in an \ir $G$-module with highest weight $\om$. 

\begin{propo}\label{ky6} {\rm  \cite[Proposition A]{CAV}} Let $G$ be a simple algebraic group of rank r,     $\om =\sum_{i=1}^r
 a_i\om_i$  a dominant  weight and
let $\mu\prec \om$ be a dominant weight such that $\mu = \om -\sum_{i=1}^r
 c_i\al_i$ with $c_1\ld c_r\in\ZZ^+$. Also, assume that $J$
is a subset  of $\{1\ld r\}$ with the property that $c_j \leq a_j$ for all $j \in J$. Set $\om' := \om-\sum_{j\in J}  (a_j-c_j)\om_j$
and $\mu' := \mu-\sum_{j\in J} (a_j-c_j)\om_j$. Then $m_\om(\mu) = m_{\om'}(\mu')$.\end{propo}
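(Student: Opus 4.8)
The plan is to lower the coordinates indexed by $J$ one unit at a time, and, for a single such step, to extract the multiplicity from one graded piece of $V_\om$ — the grading coming from the maximal parabolic attached to the node being lowered.

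\emph{Reduction to a single step.} First I would induct on $N=\sum_{j\in J}(a_j-c_j)$. If $N=0$ then $\delta=0$ and there is nothing to prove. If $N>0$, I choose $j\in J$ with $a_j>c_j$ and put $\tilde\om=\om-\om_j$, $\tilde\mu=\mu-\om_j$. Then $\tilde\om-\tilde\mu=\om-\mu=\sum_i c_i\al_i$ is unchanged, $\langle\tilde\om,\al_j^\vee\rangle=a_j-1\ge c_j$, and the coordinates of $\tilde\om$ away from $j$ equal those of $\om$; so $(\tilde\om,\tilde\mu,J)$ again satisfies the hypotheses, now with the smaller value $N-1$. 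Granting the single-step identity $m_\om(\mu)=m_{\tilde\om}(\tilde\mu)$, the induction hypothesis gives $m_{\tilde\om}(\tilde\mu)=m_{\om'}(\mu')$, finishing the reduction. Thus it suffices to prove: if $c_j<a_j$, then $m_\om(\mu)=m_{\om-\om_j}(\mu-\om_j)$.

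\emph{Passing to a graded Levi piece.} Working over $\CC$, set $\mathfrak g=\operatorname{Lie}G$ and let $\mathfrak p_j=\mathfrak l_j\oplus\mathfrak u_j\supseteq\mathfrak b$ be the maximal parabolic obtained by deleting node $j$, with $\mathfrak l_j=\mathfrak g_j\oplus\mathfrak z$, where $\mathfrak g_j$ is semisimple with simple roots $\{\al_i:i\ne j\}$, $\mathfrak z$ is the one-dimensional centre, and $\mathfrak u_j^-$ is the opposite nilradical. I would grade any highest weight module by its $\al_j$-level, the coefficient of $\al_j$ in $\om-(\text{weight})$; each level is a $\mathfrak g_j$-submodule, and $\mathfrak z$ acts on the $d$-th level by a fixed scalar. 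Since $\om_j$ vanishes on the maximal torus of $\mathfrak g_j$, we have $\om|_{\mathfrak g_j}=(\om-\om_j)|_{\mathfrak g_j}=:\bar\om$ and likewise $\mu|_{\mathfrak g_j}=(\mu-\om_j)|_{\mathfrak g_j}=:\bar\mu$, while $\mu$ and $\mu'=\mu-\om_j$ both sit in level $c_j$ (of $V_\om$, resp.\ of $V_{\om-\om_j}$). As the $\mathfrak z$-weight is constant along a level, $m_\om(\mu)=\dim(V_\om^{(c_j)})_{\bar\mu}$ and $m_{\om-\om_j}(\mu')=\dim(V_{\om-\om_j}^{(c_j)})_{\bar\mu}$, so it is enough to prove $V_\om^{(c_j)}\cong V_{\om-\om_j}^{(c_j)}$ as $\mathfrak g_j$-modules.

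\emph{The key comparison.} Let $M_j(\om)=U(\mathfrak g)\otimes_{U(\mathfrak p_j)}V^{\mathfrak l_j}_\om$ be the generalized Verma module. By PBW it is $U(\mathfrak u_j^-)\otimes V^{\mathfrak g_j}_{\bar\om}$ as a graded $\mathfrak g_j$-module, so its $d$-th level $M_j(\om)^{(d)}\cong[U(\mathfrak u_j^-)]^{(d)}\otimes V^{\mathfrak g_j}_{\bar\om}$ depends only on $\bar\om$ and $d$, not on $a_j$. For dominant integral $\om$ one has the standard presentations $V_\om=M(\om)/\sum_i U(\mathfrak g)f_i^{a_i+1}v^+$ and $M_j(\om)=M(\om)/\sum_{i\ne j}U(\mathfrak g)f_i^{a_i+1}v^+$, whence the kernel of the graded surjection $M_j(\om)\twoheadrightarrow V_\om$ is generated by the image of $f_j^{a_j+1}v^+$. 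Using the $\al_j$-root $\mathfrak{sl}_2$ one checks $e_i\,f_j^{a_j+1}v^+=0$ for every simple $i$, so this is a highest weight vector of $\al_j$-level $a_j+1$; hence the kernel lies in levels $\ge a_j+1$ and the surjection is an isomorphism in every level $d\le a_j$. Applying this to $\om$ (with $c_j\le a_j$) and to $\om-\om_j$ (with $c_j\le a_j-1$), and using that both share the same $\bar\om$, I obtain
\[
V_\om^{(c_j)}\cong M_j(\om)^{(c_j)}\cong M_j(\om-\om_j)^{(c_j)}\cong V_{\om-\om_j}^{(c_j)}
\]
as $\mathfrak g_j$-modules, which gives the equality of $\bar\mu$-multiplicities. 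The one genuine input — and the place where the hypothesis $c_j<a_j$ is indispensable — is the location of this first relation at $\al_j$-level $a_j+1$: it is precisely what keeps the level $c_j$ below the threshold for both $\om$ and $\om-\om_j$, so that both truncations coincide with the $a_j$-independent module $M_j(\cdot)^{(c_j)}$. Everything else is bookkeeping about the level grading and the vanishing $\om_j|_{\mathfrak g_j}=0$.
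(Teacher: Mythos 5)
The paper does not actually prove this statement: Proposition \ref{ky6} is imported verbatim from \cite[Proposition A]{CAV} with no argument given, so your proposal is being compared against a bare citation. On its own merits your proof is correct and self-contained. The reduction to the single step $m_\om(\mu)=m_{\om-\om_j}(\mu-\om_j)$ for $c_j<a_j$ is sound (the final target $\om'$ is unchanged at each step and $N$ drops), and the single-step argument is right: the $\al_j$-level grading makes each level an $\mathfrak{l}_j$-module, level plus restriction to the torus of $\mathfrak{g}_j$ recovers the full weight, the generalized Verma module $M_j(\om)\cong U(\mathfrak{u}_j^-)\otimes V^{\mathfrak{g}_j}_{\bar\om}$ has levels independent of $a_j$, and the kernel of $M_j(\om)\twoheadrightarrow V_\om$ is generated by the singular vector $f_j^{a_j+1}v^+$ sitting at level $a_j+1$, so truncation below that level is an isomorphism for both $\om$ and $\om-\om_j$. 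One small imprecision: you assert that $(\tilde\om,\tilde\mu,J)$ ``again satisfies the hypotheses,'' but $\tilde\mu=\mu-\om_j$ need not be dominant (the paper itself remarks that $\mu'$ need not be), so the induction should formally be run on the statement with $\mu$ only required to satisfy $\mu=\om-\sum c_i\al_i$, $c_i\in\ZZ^+$; since your single-step argument never uses dominance of $\mu$, this is a matter of phrasing, not substance. Note also that your argument is genuinely a characteristic-zero one (it rests on $V_\om=M(\om)/\sum_iU(\mathfrak g)f_i^{a_i+1}v^+$), which matches the setting in which the paper invokes the proposition and the setting of \cite{CAV}; what it buys over the citation is a short, transparent explanation of exactly where the hypothesis $c_j\le a_j$ enters, namely in keeping the level $c_j$ strictly below the first relation.
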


\med
Remark. In Proposition \ref{ky6} the weight $\mu'$ is not necesserily dominant.

\bl{bos9} {\rm \cite[Lemma 3.7]{BOS2}} Let G be of type $A_n$, and let $\phi$ be an \irr of G with \hw $\om=\sum_{i=j}^k a_i\om_i$ for  $1\leq j<k\leq n$ and $a_ja_k\neq 0$. 
Then $\phi$ has a weight of \mult at least $k-j$. \el

\section{Modules with maximum  weight multiplicity   $2$}

 We first consider the case of groups of type $A_2$, in this case, in contrast with the groups $A_n$ with $n>2$, there are infinite series of \ir $G$-modules with maximal weight \mult 2.

\subsection{Groups of type $A_2$}

In this section $G=A_2$. Note that $\al_1=2\om_1-\om_2, $ $\al_2=2\om_2-\om_1, $ so $\al_1+\al_2=\om_1+\om_2 $.




\bl{ca1} Let $G=A_2$. Let $\lam=(a,b)$ with $a,b>1$. Then the \mult of
weight $\lam-2(\al_1+\al_2)$ in $V_\lam$ equals $3$.
\el

\bp In Proposition \ref{ky6} take $J=\{1,2\}$ and we can take 
 $\mu=\lam-(22)$ as $\al_1+\al_2=\om_1+\om_2$.  Then $\lam'= (22) $, $\mu'=\mu-(\lam-(22))= (00)$. By \cite{lub}, the \mult of weight 0 in 
$V_{(22)}$ equals 3, and the result follows by Proposition \ref{ky6}. \enp

\bl{12w} Let $G=A_2$ and $\om=(1,b)$, $b>0$.

$(1)$  $\om\in \Om'_2(G)$. 

$( 2)$ The weights $(k+1,b-2k)=\om -k\al_2$ with $0\leq 2k\leq b$ are the only dominant weights of \mult $1$ in $V_\om$; their number equals  $[b/2]+1;$

$( 3)$ The number of   weights in $V_\om$ of  \mult
 $1$ equals $3b+3$ and those of  \mult
 $2$ equals $
b(b+1)/2$.  \el


\bp Let $\mu$ be a weight of $V_\om$. Then $\mu=\om-\beta$,   where $\be= c_1\al_1+c_2\al_2$ for some integers $c_1,c_2\geq 0$. 
Note that   $\mu$ is dominant then  either $\be= -c_2\al
_2$, $0<c_2\leq b/2$ or  $\be=c_1\al_1-c_2\al_2$, $c_1,c_2>0$. (Indeed, let $\mu=\om- c_1\al_1-c_2\al_2=(1,b)-c_1(2,-1)-c_2(-1,2)=(1-2c_1+c_2, b+c_1-2c_2)$. This weight is dominant \ii $1-2c_1+c_2,b+c_1-2c_2\geq 0, $ and hence either   $c_1=0$, or $c_1,c_2>0$.) In addition, $c_2<b$. (Indeed, otherwise $c_1\leq 1$ and $b+c_1-2c_2\geq 0$ implies 
$b+1\geq 2c_2\geq 2b$, a contradiction.)  

\med
(1) In  notation of  Proposition \ref{ky6}, and additionally set 
$\gamma= \sum_{j\in J} c_j\om_j$.
Choose $J=\{2\}$ in  Proposition \ref{ky6} and let $\mu=\om-c_1\al_1-c_2\al_2$ be a dominant weight of $V_\om$. 
Then $\gamma=(0,c_2)$ and $\lam' =(1,b)-(0,b-c_2)=(1,c_2)$. The cases with $b\leq 2$ follows from \cite{lub}. We use induction on $b$,  the base of induction is $\om\in\{(1,1),(1,2)\}$. 
By induction assumption, we have $m_{\om'}(\mu') \leq 2$ for every weight
$\mu'$ of $V_{\om'}$. By  Proposition \ref{ky6}, $m_\om(\mu)=m_{\om'}(\mu') $,
whence $m_\om(\mu)\leq 2$.  This yields the first assertion of the lemma. 
\med

(2) 
 By  \cite[Theorem 1.1]{BZ}, $m_\lam(\mu)=1$ if $c_1=0$ and $m_\lam(\mu)>1$ otherwise. So the claim follows from $(1)$.

\med

(3)  Let $n_1,n_2$ be as in Table 1. By the above, the dominant weights of \mult $1$ are of the form $\om -k\al_2=(1,b)-k(-1,2)=(-k+1,b-2k)=(b-k+1)\ep_1+(b-2k)\ep_2)$ with $0\leq 2k\leq b$. The other 
weights of \mult $1$ are in the $W$-orbits of dominant ones. As $2k\leq b$, the coefficient of $\ep_1$ is non-zero and differs from that of $\ep_2$. The latter is non-zero unless $b=2k$.  Then  the stabilizer of   such a weight in $W$ is trivial, unless $b=2k$ when this   of order 2. As $|W|=6$, the $W$-orbit of  $\om -k\al_2 $ is of size 6, unless $b=2k$ when this is of size 3.
Let $d_1$ be the number of dominant weights of \mult 1.  

If $b$ is odd then every orbit is of size 6 and hence $n_1=6d_1=6(b+1)/2=3(b+1)$. If $b$ is even then $n_1=3+6(d_1-1)=6(b/2)+3=3(b+1)$.  
 
 As the weights of $V_\om$ are of \mult 1 or 2, it follows that $n_2= (\dim V_\om-d_1)/2$. The dimension of $V_\om=(b+1)(b+3)$; this follows by a formula (\ref{e5})
 for $\dim V$ recorded in Section 4, see Lemma \ref{dd1} below.  So  $n_2=((b+1)(b+3)-3(b+1))/2=b(b+1)/2 $.  \enp


\begin{propo}\label{2aa} Let $G=A_2$. Then  $\Om_2'(G)=\{(1,k),(k,1)\}$ for $k\geq 1$. \end{propo}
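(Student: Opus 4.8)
The plan is to run a finite case analysis on the dominant weight $\om=(a,b)$ of $G=A_2$, splitting according to $\min(a,b)$, and to feed each case into the two lemmas just established together with the self-duality structure of $A_2$-modules. Recall that $\Om_2'(G)=\Om_2(G)\setminus\Om_1(G)$ consists exactly of those $\om$ for which $V_\om$ has a dominant weight of multiplicity $2$ but none of higher multiplicity; since weight multiplicities are constant on $W$-orbits, this amounts to requiring the maximum weight multiplicity of $V_\om$ to equal $2$.

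First I would dispose of the weights that cannot lie in $\Om_2'(G)$. If $a>1$ and $b>1$, then Lemma \ref{ca1} exhibits the weight $\om-2(\al_1+\al_2)$ of multiplicity $3$ in $V_\om$, so $\om\notin\Om_2(G)$ and hence $\om\notin\Om_2'(G)$. If instead $a=0$ or $b=0$, then $\om=(a,0)$ or $\om=(0,b)$; these highest weights afford the symmetric power modules $\Sym^a(V_{\om_1})$ and $\Sym^b(V_{\om_2})$ (the natural $3$-dimensional module and its dual, with the trivial module when $a=b=0$), all of whose weights have multiplicity $1$. Thus $\om\in\Om_1(G)$, so again $\om\notin\Om_2'(G)$.

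It then remains to treat the weights with $\min(a,b)=1$, namely $\om=(1,b)$ with $b\geq 1$ and $\om=(a,1)$ with $a\geq 1$. For $\om=(1,b)$ with $b\geq 1$, Lemma \ref{12w}(1) gives directly $\om\in\Om_2'(G)$. For $\om=(a,1)$ with $a\geq 1$, I would pass to the contragredient module: since for $A_2$ the involution $-w_0$ interchanges $\om_1$ and $\om_2$, the dual of $V_{(1,a)}$ is $V_{(a,1)}$, and dualizing negates all weights while preserving their multiplicities. Hence the maximum weight multiplicity of $V_{(a,1)}$ equals that of $V_{(1,a)}$, which is $2$ by the previous case, so $(a,1)\in\Om_2'(G)$. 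As the three cases exhaust all dominant $(a,b)$ with $a,b\geq 0$, this yields $\Om_2'(G)=\{(1,k),(k,1):k\geq 1\}$.

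The only genuinely non-formal input is the multiplicity-freeness of $V_{(a,0)}$ and $V_{(0,b)}$, which I expect to be the main (though minor) obstacle: it must either be quoted from the known classification of multiplicity-free representations (equivalently, read off from Table 2) or verified on the spot from the description of these modules as symmetric powers of the natural module. Everything else reduces cleanly to Lemmas \ref{ca1} and \ref{12w} together with the duality $(a,b)\mapsto(b,a)$.
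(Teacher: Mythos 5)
Your proposal is correct and follows essentially the same route as the paper, which simply cites Lemma \ref{ca1} (to exclude $a,b>1$), Lemma \ref{12w} (for the weights $(1,b)$), and Table 2 (which records that $(a,0)$ and $(0,b)$ lie in $\Om_1(A_2)$). The only detail you make explicit that the paper leaves implicit is the passage from $(1,a)$ to $(a,1)$ by duality, which is the standard and intended argument.
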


\bp This follows from Lemmas \ref{ca1},  \ref{12w} and Table 2.\enp
 
\subsection{Groups of type $A_n, n>2$}

The \f lemma is proved in \cite[Lemma 3.2.3]{KT}:  

\bl{a33} Let $G=A_3$. Then  $  \Om'_2 (G) =\{(110),(011),(020),(030)\}$.\el

\bl{su1} Let $G=A_n,n>3$ and let $\om$ be as in Table $1$. Then $\om\in\Om_2'(G)$.
\el

\bp 
 As the sets of weight mutiplicities of dual modules are the same, it suffices to consider the entries $(110...0)$ and $(020...0)$ of the table. 

Note that subdominant weights of $2\om_2$ are $\om_1+\om_3$ and $\om_4$,
and $\om_3$ is the only subdominant weight of $\om_1+\om_2$. Moreover, $\om_4=\om_1+\om_3-\al_1-\al_2-\al_3$, $\om_1+\om_3=2\om_2-\al_2$, 
$\om_3=\om_1+\om_2-\al_1-\al_2$. We use Lemma \ref{bgt1} with $S=\{\al_1,\al_2,\al_3\}$. Then $X$ is of type $A_3$, $ \om|_X =(020),(110)$
if $\om=2\om_2,\om_1+\om_2$, respectively.
In addition,  $\mu|_X=(000),(101),(001)$ if $\mu=\om_4,\om_1+\om_3,\om_3$, respectively.  By the above, the weight mmultiplicities of \ir $A_3$-modules 
with \hw $ (020)$ or $(110)$ are at most 2. So this is the case for $V_\om$ by Lemma  \ref{bgt1}.\enp


\bl{a4b}  Let $G=A_4$. Then     $\Om'_2(G)=\{ (1100),(0011)$, $(0020),(1100),(0200)\}$.\el

\bp Recall that  $\Om_1'(G)=\{k000),(0100),(0010),(000k)\}$ for $k>0$ and  $\Om_1(A_3)=\{(k00),(010)$, $(00k) \}$ for $k>0$. Let $\lam=(a,b,c,d)\in \Om_2'(G)$. By Lemma \ref{bos9}, $ad=0$. We can assume that  $a\leq d$, and hence $a=0$.

By Lemmas \ref{44} and \ref{a33}, $(a,b,c), (b,c,d)\in \Om_2(A_3)=\Om_1(A_3)\cup \{(110),(011), (020),(030)\}$. If $(a,b,c)\in \{(020),(030)\}$ then $(b,c,d)=(20d)$ or $(30d)$, respectively, and $d=0$ by Lemma \ref{a33}. So $(a,b,c,d)\in \{(0200), (0300)\}$, respectively. The former case is recorded in the statement, together with $(0020)$, and
the case $(0300)$ is ruled out by \cite{lub}. Suppose that  $(a,b,c) =(011)$. 
Then  $(b,c,d)=(11d) $, whence $d=0$  by Lemma \ref{a33}, so   
$\lam=(0110)$. This case  is ruled out by \cite{lub}. Finally, if $(a,b,c)=(001)$ or $(010)$ then $(bcd)=(00d)$ or $(01d)$.  In the former case 
$\lam=(000d)\in\Om_1(G)$.      In the latter case $d=0,1$ as $(01d)\notin\Om'_2(A_3)$ for $d>1$. So $\lam=(0010)$ or $(0011)$, as required. \enp

Remark. The case with $a=d,b=c$ of Lemma \ref{a4b} is settled in \cite[Lemma 3.2.5]{KT}.

\bl{gc1} Let $G=A_n$, $n> 4$. Then $\Om_2'(G)=\{(110\ldots 0), (0\ldots 011), (020\ldots 0)$, $(0\ldots 020)\}$.\el

\bp  Let $\om=(a_1\ld a_n)$.  By Lemma \ref{bos9}, $ad=0$. We can assume that  $a_1\geq a_n$, and hence $a_n=0$.  

Let $X=A_{n-1}$
be the subgroup of $G$ generated by the root subgroups $X_{\pm \al_i}: i=1\ld n-1$. By Lemma \ref{44}, $(a_1\ld a_{n-1})\in\Om_2(A_{n-1})$. We use induction on $n\geq 4$. The base of induction  $n=4$ is settled in Lemma \ref{a4b}. By induction assumption,
$(a_1\ld a_{n-1})\in \Om_1(A_{n-1})\cup (110...0),(020...0),(0...020),(0....011)$. As $a_n=0$, this implies that either $\om\in\Om_2(A_n)$ 
or $\om=(0...0200)$. This is ruled out by  applying Lemma \ref{44} to the subsystem subgroup $X$ of $G$ with subsystem base   $  \al_2\ld \al_n$.\enp

The entries in  Columns 3,4,5 of  Table 1 will be justified in Section 5.

\subsection{Type $C_n$}

 Using the tables in \cite{lub} one has to keep in mind that the root ordering in   \cite{lub}  is opposite to that in \cite{Bo} 
used below.
 




$(5)$ Suppose that $\om\neq \om_i$ for $1\leq i\leq n$. If $\om_1\prec\om$ then $\om_1+\om_2\preceq \om$; if $0\prec\om$ then $2\om_1\preceq\om.$
%

\bl{lu4} Let $G=C_2 $ and let  $(a,b)\in\Om^+(G)$.  

$(1)$ $\al_1=2\om_1-\om_{2}$, $\al_2=2\om_2-2\om_{1}$, $\al_1+\al_2=\om_2$ and $(a,b) \succ (a,b-1)$

$(2)$ $\{(20)$, $(02)$, $(11)$, $(03) ,(30)\}\in \Om'_2(C_2)$. 

$(3)$ $(40),(50), (21),(31), (04)\not\in\Om_2(C_2)$.  \el

\bp (1) is well known, and (2),(3) follows from \cite{lub}. \enp

\bl{bc2} $\Om'_2(C_2)=\{(2,0)$, $(0,2)$, $(1,1)$, $(0,3) ,(3,0)\}$. \el

\bp Let $\lam=(a,b)\notin\Om_1(C_2)$ be a dominant weight, and $(a,b)$ is not 
a weight from Lemma \ref{lu4}(2). Suppose first  that $b=0$. Then $(a,0)\succ (a-2,1)\succ  
(a-2,0)$ by Lemma \ref{lu4}(1).  
By Lemma  \ref{sh1}, if $(a-2,0)\notin \Om_2(C_2)$ then so is $(a,0)$.
By Lemma \ref{lu4}(3), $(a,0)\notin\Om_2$ for $a=4,5$, whence the result for $b=0$.

Let $b>0$. Then $(a,b)\succ (a,b-1)\succ (a,0)$. By  Lemma  \ref{sh1}, it suffices to prove the result for $a\leq 3$.

Suppose that $a=0, b>3$. Then $(0,4)\notin \Om_2(C_2)$ Lemma \ref{lu4}(3), whence $(0,b)\notin \Om_2 (C_2) $ for $b>4$ by  Lemma  \ref{sh1}.  In addition,  by Theorem \ref{sh1}, we have the following conclusions.

If $a=1, b>1$ then $(12)\notin \Om_2(C_2)$, whence $(1,b)\notin \Om_2(C_2)  $ for $b>2$.  

If $a=2$, $ b>0$ then $(21)\notin \Om_2(C_2)$, whence $(a,b)\notin \Om_2(C_2)  $ for $b>1$.   

If $a=3$, $b>1$  then $(31)\notin \Om_2(C_2)$, whence $(3,b)\notin \Om_2 (C_2) $ for $b>1$   \enp

\bl{lu2} Let $G=C_3$ and $\lam\in \{(000),(100),(001),(010),(101)\}$.  Then $\lam+2\om_i\notin \Om_2(C_3)$ for $i=1,3$ and if $\lam\neq (000)$ then $\lam+\om_2\notin \Om_2(C_3)$.
\el

\bp One observes that the weights $\lam+2\om_i$, $i=1,3$, and  $\lam+2\om_2$ are listed in the $C_3$-table of \cite{lub}, 
so the result follows by inspection of the table. 
\enp

\bl{c33} Let $G=C_3$. Then $\Om_2'(G)=\{(010)\}$.\el

\bp
Let $\om=\sum a_i\om_i\in\Om_2'(C_3)$ be a dominant weight of $G$. If $a_i\leq 1$ for $i=1,2,3$ then the result follows by inspection in \cite{lub}. 
Suppose that  $a_i>1$ for some $i\in\{1,2,3\}$. Observe that $\om_2$ and  $2\om_j$, $i=1,3$,  are positive roots, see \cite[Table III]{Bo}. 
Therefore, if $a_i>2$ for $i\in\{1,3\}$ then $\om\succ \om-2\om_i$ is a dominant weight, if $a_2>2$ then $\om-\om_2$  is a dominant weight of $V_\om$. \itf $\om\succ\om'$, where $\om'=\sum a_i'\om_i$ is a dominant weight of $V_\om$ for some $a_1',a_2',a_3'\leq 1$
and if $a_2>0$ then we can assume $a_2'=1$. By Lemma \ref{sh1}, we have $\om'\in\Om_2(C_3)$. Then $\om'\in\{(000),(100),(001),(010)\}$
by \cite{lub}.   If $\om$ is not in this set then $\om'+\nu\preceq \om$ is  a weight of $V_\om $ for some  $\nu\in\{2\om_1,\om_2,2\om_3\}$, and $\om'+\nu\in \Om_2(G)$ by   Lemma \ref{sh1}. This contradicts Lemma \ref{lu2},  unless $\om'=0, \nu=\om_2$. If $\om=\om_2$, we are done. Suppose that $\om\neq\om_2$. Let $\nu'$ be a minimal dominant weight such that $\nu'\succ \om_2$. Then  $\nu'\in\{(210),(020),(012 )\}$.   As $\om\neq \om_2$, we have   
  $\om\succeq \nu'$ for some $\nu'\in\{(210),(020),(012 )\}$.  These cases  
are ruled out by \cite{lub} and   Lemma \ref{sh1}. \enp

\bl{luc4}  {\rm \cite{lub}} $(1100),(0100),(0010),(0001),(1001),(2000), (3000)\notin \Om_2(C_4)$.\el

\bl{c44} Let $G=C_4$. Then $\Om_2'(G)=\{(0001),(0010)\}$.\el

\bp By \cite{lub}, we have $(1100),(0100),(0010),(0001),(1001),(2000), (3000)\notin \Om_2(C_4)$.

 Let   $\lam=(a,b,c,d)\in \Om'_2(C_4)$. We apply Lemma \ref{bgt1} to subsystem subgroups $(-***)$ of type $C_3$ and $(***-)$ of type $A_3$.
By Lemma \ref{c33} and Table 2 we have $(abcd)\in (a000),(a100),(a001),(a010)$; by Lemma \ref{a33}   and Table 2 $(abc)\in\{
(a00),(00c), (010),(110)$, $(011)$, $(020)\}$. So  $(abcd) \in \{(a000), (1100),(0100),(0010),(0001)$, $(1001)\}$. If  $\om=a\om_1$
then $a>1$ by Table 2. As $\om_1$ is a simple root,  we have  $\om=a\om_1\succ (a-2)\om_1\succ ... \succ\nu$ for $\nu=2\om_1,3\om_1$. By   Lemma    \ref{sh1}, $\nu\in\Om_2(C_4)$, which contradicts Lemma \ref{luc4}.\enp

\bl{c4n} Let $G=C_n,n\geq 5$. Then $\Om_2'(G)=\emptyset$.  \el

\bp We use induction on $n\geq 4$. For  $n=4$ see Lemma \ref{c44}. Let $\om=\sum a_i\om_i\in\Om_2(G)$. 
We apply Lemma \ref{bgt1} to subsystem subgroups $(-*...*)$ of type $C_{n-1}$ and $(*...*-)$ of type $A_{n-1}$.

Observe first that $a\om_1$ for $a>1$ and $ a\om_1+\om_n$ for $a>0$ are not in $    \Om_2(G)$, $n>4$. Indeed, consider the $C_3$ subsystem subgroup $X$ with system base
$\al_1'=\{\al_1+...+\al_{n-2},\al_2'=\al_{n-1},\al_3'=\al_n\}$. By Lemma \ref{44}, $V_\om|_X$ contains an \ir constituent with \hw  $a\om_1|_{T\cap X}=a_1\om'_1$ and $a_1\om'_1+\om_3'$, respectively, where $\om_i', i=1,2,3,$ are the corresponding fundamental weights of $X$.  This contradicts Lemma \ref{c33} and Table 2.

In addition, $\om_1+\om_2\notin\Om_2(C_n)$. Indeed,  take for $X$  a subsystem  subgroup  of type $C_4$ with  system base
$\{\al_1'=\al_1,\al_2'=\al_2+...+\al_{n-2},\al_3'=\al_{n-1},\al_4'=\al_n\}$. By Lemma \ref{44}, $V_\om|_X$ contains an \ir constituent $W\in\Om_2(C_4)$, whose  \hw  of $W$ is $\om |_{T\cap X}=\om'_1+\om_2'$, where $\om_i', i=1,2,3,4,$ are the   fundamental weights of $X$.   As $(1100)\notin \Om_2(C_4)$, we get a contradiction.

Suppose first that $n=5$. Then $\om\in\{(a_10000),(a_11000),(a_10001),(a_10100) \}$ by Lemma \ref{c44} and Table 2. Using the subsystem subgroup $(****-)$ of type $A_4$,  Lemma \ref{a4b} and Table 2,
we are left with $\om\in\{(a_10000), (11000),(01000)$, $(00010),(a_10001)\}$. The first  case is ruled out above, and the next two cases  are ruled out by \cite{lub}.

 Consider $\om=(a_10001)= a\om_1+\om_5$. If $a$ is odd then $\om\succ \om_1+\om_5$. As $ \om_1+\om_5\notin\Om_2(G)$ by \cite{lub}, the result   follows by  Lemma  \ref{sh1}. Let  $a_1$ be even. If $a_1=0$ then the result follows by \cite{lub}. Let $a_1>0$.  Then  $\om\succ 2\om_1+\om_5$.
 By \cite[p. 228]{BPM}, $\om=(20001)\notin\Om_2(C_5)$, and the the result again follows by  Lemma \ref{sh1}. 
 
Let $n=6$. As above we have $\om=(a_100000),(a_1 10000),(a_100001)$ by induction and  Lemma \ref{bgt1}.    Applying Lemma \ref{gc1},
we are left with $\om=(a_100000),      (110000),(a_100001)$.  These cases  are sorted out above.  

Let  $n>6$. Then $\om=(a_100000) )$ by induction and  Lemma \ref{bgt1}.  This case is rules out above. \enp

\subsection{Groups of types  type $B_n,D_n$} In this section 
 we show that $\Om_2(G)=\Om_1(G)$ for $G$ of type $B_n.n>2$ and $D_n,n>3$.

\bl{dd4}{\rm \cite[Lemma 3.2.4]{KT}}  $\Om_2(D_4)=\Om_1(D_4)$, that is, $\Om_2'(D_4)$ is empty.
 \el

\bl{23b} Table $1$ is correct for $G=B_n,n>1$ or $D_n,n>3$.\el

\bp Note that $B_2\cong C_2$ and the weights $(a,b)$ for $B_2$ are interpreted as $(b,a)$ for $C_2$. The entries in Table 1 for $C_2$ do not change    under replacing $(ab)$ by $(ba)$, so we conclude that the entries for $B_2$ coincide with those for $C_2$.

Let $G=B_3$.  The cases with $a_i\leq 2 $ for $i=1,2,3$  are treated in \cite{lub}, and the result follows by inspection. 

Note that $\om_1=\al_1+\al_2+\al_3,\om_2=\al_1+2\al_2+2\al_3$ are positive roots, and $2\om_3=\al_1+2\al_2+3 \al_3.$
Let $\om=\sum a_i\om_i$. Then $\om\succ \om'=\sum a_i'\om_i$ for some $a'_1,a'_2,a'_3\leq 1$. By Lemma \ref{sh1}, $\om'\in\Om_2(B_3)$, whence 
$\om'\in \{(000),(100),(001)\}$ by the above. If $\om\neq \om'$, then there exists a dominant weight $\nu=\sum b_i\om_i $ such that $\om'\prec\nu\preceq \om$ and $\nu-\om'\in\{\om_1,\om_2,2\om_3\}$. Then $b_1b_2b_3=0$ and $b_1,b_2\leq 2, b_3\leq 3$. All these cases are recorded in \cite{lub}, and one   checks that  $\nu\notin \Om_2(B_3) $. This contradicts   Lemma \ref{sh1}.

Next we examine the case where  $G=D_4$. Let $\om=\sum a_i\om_i\in \Om_2(D_4)$.  By inspection in \cite{lub}, one concludes that if
 $a_1,a_2,a_3,a_4\leq 1$ and  $\om\in\Om_2(D_4)$ then $\om\in\Om_1(D_4)$. 
One observes that $\om_2,2\om_i$, $i=1,3,4$, is a sum of positive roots. \itf  $\om\succ \om'=\sum a_i'\om_i$, where $a_1,a_2,a_3,a_4\leq 1$.
Then  $\om'\in\Om_2(D_4)$ by  Lemma \ref{sh1}, and hence $\om'\in\Om_1(D_4)$ by the above. Recall that $\Om_1(D_4)=\{\om_1,\om_3,\om_4\}$.
 If $\om\neq \om'$, then there exists a dominant weight $\nu=\sum b_i\om_i $ such that $\om'\prec\nu\preceq \om$ and $\nu-\om'\in\{2\om_1,\om_2,2\om_3,2\om_4\}$. Such weights $\nu$ are recorded in \cite{lub}, and one observes that $\nu\notin \Om_2(D_4)$. 
This contradicts   Lemma \ref{sh1}.

Let $G=B_n,n>3$ or $D_n,n>4$. We use induction on $n$ with the base of induction to be $n=3$, 4,  respectively. Let $\om=\sum a_i\om_i\in\Om_2(G)$. Applying Lemma \ref{bgt1} to a subsystem subgroup 
with $S=\{\al_2\ld\al_n\}$ and using the induction assumption, we conclude that $\om\in \{(a_1,0...0),(a_1,10...0)\}$ in case $B_n$ and  
$\om\in\{(a_1,0...0),(a_1,10...0), (a_1,0...010),(a_1,0...01)\}$  in case of $D_n$. Applying   Lemma \ref{bgt1} to a subsystem subgroup with $S=\{\al_1\ld\al_{n-1}\}$  in case $B_n$,
we are left with $\om=(a_1,0...0),(110...0)$, $(010...0)$. In case of $D_n$ we choose $S=\{\al_1\ld\al_{n-1}\}$ if $\om=(a_1,0...010)$ and $S=\{\al_1\ld\al_{n-2},\al_n\}$ if $\om=(a_1,0...01)$. Then we are again left with  $\om=\{(a_1,0...0),(110...0),(010...0)\}$. 

The case  with $\om=\om_2$  both for $B_n,D_n$ is ruled out by \cite[Table 2]{TZ18}, in fact, this is well known.  Observe that $2\om_1$ is not in $\Om_2(G)$ for both  $B_n,D_n$, see for instance \cite[Table 2]{TZ18}. Next we mimic the reasoning in \cite[Lemmas 6.14 and  6.15] {Seitzclass}.

Let $G=D_n$. Then the case $\om=\om_1+\om_2$ is ruled out by applying  Lemma \ref{44} to a subsystem subgroup $X$ with  system base   $\al_1, \al_2+...+\al_{n-2},\al_{n-1}-\al_n,\al_{n-1}+\al_n$.  Note that $X$ is of type $D_4$ and,  by Lemma \ref{44},  $V|_X$ contains a composition factor with \hw $\om|_X$. As $\lan \om_1+\om_2,  \al_2+...+\al_{n-2}\ran=\lan \om_1+\om_2,  \al_2\ran=1$, we have   
$\om|_{X}=(1100)$.  By \cite{lub},  this factor has a weight of  \mult  6. So $\om\notin \Om_2(D_n)$. 
Similarly, if $\om=3\om_1$ then $V|_X$ contains a composition factor with \hw $\om|_X$, where $\om=(3000)$. This factor has a weight of \mult 3, and hence $\om\notin \Om_2(D_n)$ by Lemma \ref{44}. 

Suppose that $\om=a\om_1$, $a>3$. Then $\om\succ b\om_1$ for $b\in \{2,3\}$. By the above, $b\om_1\notin \Om_2(D_n)$ and so is $a\om$ by Lemma \ref{sh1}.
 
Next suppose that $G=B_n, n>3$. Define a subsystem subgroup $X$ by taking  $S=\{ \al_1\ld\al_{n-1}, \al_{n-1}+2\al_n\}$ as a set of simple roots of $X$. Then $X$ is of type $D_n$ and $\om':=\om|_X=(a0...0)$ or $(110...0)$ if $\om=a\om_1$, $\om_1+\om_2$, respectively. By Lemma \ref{44}, $\om|_X\in \Om_2(D_n)$. This contradicts the result for $D_n$ already proved, unless $a=1$. This implies the result. \enp

\subsection{Exceptional types }

\bl{e22} Table $1$ is correct for the simple   groups of exceptional types.\el

\bp
Let $G=E_n$, $n\in\{6,7,8\}.$ Let $\om=\sum a_i\om_i\in\Om_2(G)$. Applying Lemma \ref{44} to the subsystem subgroup $X=(-*...*)$ of type $D_{n-1}$ 
we get $\om|_X\in \Om_2(D_{n-1})$, so $\om\in\{a_1\om_1, a_1\om_1+\om_j, j=2,3,n \}$ by Tables 1,2.

Let $n=6$.  then we
repeat this using the subsystem subgroup $X=(*...*-)$. 
Then  $\om\in\{a_6\om_6, a_6\om_1+\om_j, j=1,2,5 \}$ by Tables 1,2. The only common weights of these two lists are $0,\om_1,\om_6,\om_2,\om_1+\om_6$. As $0,\om_1,\om_6\in\Om_1(E_6)$ and $\om_2\notin \Om_2(E_6)$ by \cite{lub}, we are  left with $\om=\om_1+\om_6$. This case is ruled out by applying Lemma \ref{44}  to  the subsystem subgroup  $X=(*-*...*)$ of type $A_5$. (The  weight $\om|_X$
is the highest weight of the adjoint \rep of $A_5$, in which the weight 0 occurs with\mult 5.)  
 
Let $G=E_7$. Similarly, using the subsystem subgroup of type $E_6$ we get $\om\in  \{a_7\om_7,\om_1+a_7\om_7,\om_6+a_7\om_7\}$. Then, by the above,  using   $X=(-*...*)$  of type $D_6$,
we conclude that  $\om\in\{0,\om_7\}$. As $0,\om_7\in\Om_1(E_7)$ by Table 2, the result follows. 
 
Let $G=E_8$. Using $X=E_7$ we get $\om\in\{ a\om_8, \om_7+a\om_8\}$. As above, take $X=(-*...*)$. Then we have $\om\in \{0,\om_8\}$. However, $\om\neq \om_8$ by \cite[Table 2]{TZ18}.

Let $G=F_4$. Then we use   the subsystem subgroup $X=(***-)$  of type $B_3$  
and the subsystem subgroup $Y=(-***)$ of type $C_3. $   
As above, Lemma \ref{44} implies $\om\in \{ a_4\om_4,\om_1+a_4\om_4,\om_3+a_4\om_4\}$ and 
$\om\in \{a_1\om_1,a_1\om_1+\om_i,i=2,3,4\}$, respectively. The common weights are $0,\om_1,\om_1+\om_4, \om_4,\om_1+\om_4$. By inspection in \cite{lub}, we get  $\Om_2'(F_4)=\om_4$. 

Let $G=G_2$. Note that $\om_1\in \Om_1(G_2)$ and $\om_2\in \Om'_2(G_2)$. In addition, $2\om_1,2\om_2,\om_1+\om_2\notin \Om_2(G_2)$ by \cite{lub}.
 Let $\om=a_1\om_1+a_2\om_2\in\Om_2'(G_2)$. Note that $\om_1,\om_2$ are positive roots. Therefore, $(a_1-1)\om_1+a_2\om_2\prec \om$ and 
 $a_1\om_1+(a_2-1)\om_2\prec \om$.  \itf that $\om\succeq \om'$, where $\om'\in\{2\om_1,2\om_2,\om_1+\om_2\}$. By Lemma \ref{sh1},
$\om'\in\Om_2(G_2)$, a contradiction. \enp

\section{The number of dominant weights of multiplicity 2}

The data in columns 3,4,5 for simple groups $G\neq A_n$ are taken from \cite{lub}. So we assume here that $G$ is of type $A_n, n>1$.  

We first compute the entries of the 5-th column of Table 1, that is,   the dimension of the reresentations listed in the table. 
As above, denote by  $V_\lam$ the \irr of a simple algebraic group $G$ over the complex number with highest weight $\lam$. Recall that $\Phi(G)$ is the set of the roots of $G$ and $\Phi^+(G)$ is that of positive roots. Let $n$ be the rank of $G$.

Weyl's dimension formula states $\dim V_\lam=\Pi_{\al\in\Phi^+(G)} \frac{\lan \lam+\rho,\al\ran}{\lan \rho,\al\ran}$, where $\rho=\frac{1}{2}\sum _{\al\in \phi^+(G)}\al$ is the half of the positive root sum. Note that the expressions of $\rho$ is terms of simple roots are provided in 
\cite[Tables  I--IX]{Bo}.

  Let $G=A_n$.  Exress $\lam=\lam_1\ep_1+...+\lam_n\ep_n$, where $\ep_1\ld  \ep_{n+1}$ are so called "Bourbaki weights", see 
\cite[Table  I]{Bo}. The positive roots of $G$ are $\ep_i-\ep_j$ $1\leq i<j\leq n+1$, $\rho=\om_1+...+\om_n=n\ep_1+(n-1)\ep_2+...+\ep_n$, whence 

\begin{equation}\label{e5}\dim V_\lam=\Pi_{1\leq i<j\leq n+1}\frac{\lam_i-\lam_j+j-i}{j-i}\end{equation}
see \cite[10.13, p.48]{Se}.

\bl{dd1} $(1)$Let $G=A_2$. The dimension of an \irr of  with \hw $a\om_1+b\om_2$ is $(a+1)(b+1)(a+b+2)/2$. In particular, if $a=1$ then this equals $(b+1)(b+3)$.

$(2)$ Let $G=A_n$. The dimensions of  \ir \reps   with \hw $\om\in\{2\om_2, \om_1+\om_2\}$ equal
 $n(n+1)^{2}(n+2)/12$, $n(n+1)(n+2)/12$, respectively.
 \el

\bp
(1) Let $G$ be of type $A_2$ and $\lam=a\om_1+b\om_2=(a+b)\ep_1+b\ep_2$. Then $\lam_1=a+b,\lam_2=b, \lam_3=0$.
Then  

$$\dim V_{(a,b)}=\frac{(a+1)(a+b+2)(b+1)}{2}.$$

\med
(2) Let $G=A_n$, $\om=2\om_2$. Then $\om=2\ep_1+2\ep_2$, so $\lam_1=2$, $\lam_2=2$, $ \lam_3\ld \lam_{n+1}=0$.
 
$$\dim V_{\om}=\frac{4.5...(n+2).3...n+1)}{  n!(n-1)! }=\frac{(n+2)(n+1)^2n}{2.3.2}=\frac{n(n+2)(n+1)^2}{12}.$$

Let $G=A_n$, $\om=\om_1+\om_2$. Then $\om=2\ep_1+\ep_2$, so $\lam_1=2,\lam_2=1, \lam_3\ld \lam_{n+1}=0$.

$$\dim V_{\om}=\frac{2.4.5...(n+2).2.3...n}{  n!(n-1)! }=\frac{(n+2)(n+1)n}{3}.$$

So the $A_n$-entries in Table 1 are correct. \enp

\med
Suppose first that $n>3$. To compute the number of weights of \mult 2 observe that  $\om_3$ is the only subdominant weight of $V_{\om_1+\om_2}$, hence it has \mult 2.
The other weights of \mult 2 are in the $W$-orbit of $\om_3=\ep_1+\ep_2+\ep_3$. As $W\cong S_{n+1}$ acts on $\ep_1\ld \ep_{n+1}$
by permutations, the stabilizer of $\om_3$ in $S_{n+1}$ is of order $6(n-2)!$, and hence the number of weights of \mult 2 is $n(n^2-1)/6$.

The dominant weights of $V_{2\om_2}$  are $\om_1+\om_3=\om-\al_2$ and $\om_4=\om-\al_1-2\al_2-\al_3$ (for $n>3$). 
 
If $n=4$ then, by Lemma \ref{bgt1}, the multiplicities of these weights equal to those for $n=4$. By \cite{lub},  the \mult of  $\om_1+\om_3$ equals 1 and that of $\om_4$ equals 2. 
As  $\om_4=\ep_1+\ep_2+\ep_3+\ep_4$, the number of weights of \mult 2 equals $(n+1)!/4!(n-3)!=n(n-2) (n^2-1)/4!$. 
 The number of the \mult 1 weights is $\dim V_{2\om_2}-\frac{n(n-2) (n^2-1)}{12}=\frac{n(n+2)(n+1)^2}{12}-\frac{n(n-2) (n^2-1)}{12}=\frac{n(n+1)[ (n+2)(n+1)-(n-2)(n-1)}{12} =\frac{n(n+1)n}{2}$.

For other use we compute $\dim V_{\om_1+\om_3}$. Here $\om_1+\om_3=2\ep_1+\ep_2+\ep_3$, so $\lam_1=2,\lam_2= \lam_3=1$, $\lam_4\ld \lam_{n+1}=0$. By  (\ref{e5}), we have 

$$\frac{2.3.5...(n+2).3.4...n.2...(n-1)}{n!(n-1)!(n-2)!}=\frac{(n+2)!n!(n-1)!}{4.2.n!(n-1)!(n-2)!}=(n^2-1)n(n+2)/8.$$


\vspace{10pt}


\newpage 
\centerline{Table 2: Non-trivial \ir \reps of simple algebraic groups} \centerline{ with all weights of \mult 1}

\bigskip\small{
\begin{center}
\begin{tabular}{|l|c|}
\hline
$~~~~~~~~~~{\rm type}$&$ \Omega_1(G)\setminus \{0\}$ \\
\hline
$A_1$&$a\omega_1, a>0$\\
\hline
$A_n, n>1$&$a\omega_1, b\omega_n,  a,b>0$\\
&$ \omega_i,\  1<i< n$\\
\hline
$B_n, n>2 $&$\omega_1,\ \omega_n $\\
\hline
$C_2 $&$\omega_1,\ \omega_2$\\
 \hline
$C_3$&$\omega_3$\\
\hline
$C_n,n>2 $&$\omega_1,$\\
 \hline
$D_n, n>3$&$\omega_1,\ \omega_{n-1},\  \omega_n$\\
 \hline
$E_6$&$\omega_1, \ \omega_6$\\
\hline
$E_7$&$\omega_7  $\\
\hline
$F_4, p=3$&$\omega_4$ \\
\hline
$G_2, p\neq 3$&$\omega_1$\\
\hline 
$G_2, p= 3$&$\omega_1,\ \omega_2$\\
\hline
\end{tabular}
\end{center}

\end{document}

 ADITIONAL MATERIAL 1

Table 2: Non-trivial $p$-restricted $G$-modules 
 whose weights are one-dimensional
\bigskip

\begin{table}[h]
\begin{tabular}{|l|c|c|c| }

        \hline
         $\,\,\,\,\,\,\,\,\,\,\, G$&highest weight 
&
weight 0 \mult&dimension\\
        \hline
          $A_n,\,\,n>1,\,\,\,\,p\not|\, (n+1)$&$\om_1+\om_n$& $n$&$n^2+2n$
\cr $(n,p)\neq (2,3),\, p\,|\,(n+1)$&$\om_1+\om_n$& $n-1$ &$n^2+2n-1$\cr

 $A_3$,  $p>3$&  $ 2\om_2$& 2  & $20$ \cr \hline

          $B_n, n>2,p\neq 2$& $\om_2$&  $n$& $2n^2+n$\cr
\,\,\,\,\, $p\,\,|\,(2n+1)$&$2\om_1$&$n$ &$2n^2+3n-1 $ \cr
\,\,\,\,\, $p\not|\,(2n+1)$&$2\om_1$& $n+1$&$2n^2+3n$\cr
 $B_2$, $ p\neq 2$& $ 2\om_2 $&  $2$&10 \cr
 \,\,\,\,\, $ p\neq 2,5$& $ 2\om_1 $&  $2$&14 \cr
\hline



$C_n$, $n>2,$&$2\om_1$ &$n$&$2n^2+n$\cr

\,\,\,\,\, $(n,p)\neq(3,3)$&$\om_2$& $n-1$ if $p\not|n$&$2n^2-n-1$\cr

&& $n-2$ if $p|n$,\,\,&$2n^2-n-2$\cr

$C_2$, $p\neq 2$&$2\om_1$&$2$&10\cr

\,\,\,\,\, $p\neq 2,5$&$2\om_2$&$2$&14 \cr

$C_4$, $p\neq 2,3$&$\om_4$&$2$&36\cr

\hline

 $D_n$, $n>3,p\neq 2$&$2\om_1$& $ n-2$ if $p|n$, & $2n^2+n-2$ if $p|n$,\\
&&$n-1$ if
$p\not|n$&  $2n^2+n-1$ if $p\not|n$\cr
 & $\om_2$& $n$&$2n^2-n-1$\cr
\,\,\,\,\, \,\,\,\,\, $p=2$& $\om_2$&
$n-(2,n)$ &$2n^2-n-(2,n)$\cr

\hline

          $E_6,$ \,\,\,\,$p=3$&$\om_2$&5  & 77  \cr
\,\,\,\,\, \,\,\,\,\,\,\,\,$p\neq 3$&& 6 &78\cr
\hline $E_7$, \,\,\,\,$p\neq 2$&$\om_1$ &7&133 \cr
\,\,\,\,\, \,\,\,\,\,\,\,$p= 2$&&6& 132 \\
 \hline
 $E_8$&$\om_8$ & 8&248\cr
\hline
          $F_4,$\,\,\,\,\,\,$p=2$&$\om_1$ &2 &26    \cr
\,\,\,\,\, \,\,\,\,\,\,\,$p\neq 2$&&4&52\cr
\,\,\,\,\, \,\,\,\,\,  $p\neq 3$& $\om_4$ &2 &26\cr
\hline
          $ G_2$, $\,\,\,p\neq 3$&$\om_2$&2&14
             \\
\hline
    \end{tabular}

\end{center} }
\end{table}


\begin{table}\label{om1}
Table 2: 
\vskip1cm

\begin{center}
\small{

\bigskip
\begin{tabular}{|l|c|}

        \hline
         $\,\,\,\,\,\,\,\,\,\,\,  G$& highest weight  
\\

  \hline
$A_1$&$a\om_1$, $\,\,\,\, 1\leq a<p$\cr
        \hline
          $A_n, n>1$&$a\om_1$, $\,\,b\om_n$, $\,\, 0\leq a,b<p$, $\,\,\om_i$, $1<i< n$
\cr &$c\om_i+(p-1-c)\om_{i+1}$,  $\,\,i=1\ld n-1,$ $\,\, 0\leq c<p$ \cr
\hline
          $B_n$, $n\geq 2$&$\om_1,\om_n$  \cr
\hline
$B_2$, $p\neq 2,3$&$\om_1,\om_2, \frac{p-3}{2}\om_1+\om_2,\frac{p-1}{2}\om_1$
\cr\hline
          $C_n$, $n>1$, $p=2$&$\om_1,\om_n$\cr  \hline
$C_n$, $n\geq 3$, $p\neq   2$&$\om_1,
\om_{n-1}+\frac{p-3}{2}\om_n,\frac{p-1}{2}\om_n$\cr
\hline $C_2$, $p\neq 2,3$&$\om_1,\om_2,
\om_1+\frac{p-3}{2}\om_2,\frac{p-1}{2}\om_2$\cr \hline
$C_3$, $p\neq   2$&$\om_3$\cr
\hline
$D_n$, $n>3$&$\om_1,\om_{n-1}, \om_n$\cr \hline

          $E_6$&$\om_1$, $\om_6$\cr
\hline
          $E_7$&$\om_7$  \cr
\hline
          $F_4 $, $p=3$&$\om_4$ \cr
\hline
          $ G_2$, $p\neq 3$&$\om_1$
             \\
\hline $ G_2$, $p= 3$&$\om_1,\om_2$
             \\
\hline   \end{tabular}
}

\end{center}
\end{table}


\end{document}